%document type
\documentclass[12pt,a4paper]{article}
\usepackage{amsmath,amsthm,amssymb, mathrsfs, tocloft}
\usepackage{hyperref}
%for diagrams
\input xy
\xyoption{all}
\usepackage{epsfig}
%%%%%%%%%%%%%%%%%%%%%%

%% SIZES
\oddsidemargin 0.1875 in \evensidemargin 0.1875in
\textwidth 6 in % Width of text line.
\textheight 230mm \voffset=-4mm %%

%% FORMULATIONS
\newtheorem{thm}{Theorem}[section]

\newtheorem{lem}[thm]{Lemma}

 \theoremstyle{definition}

\theoremstyle{remark}
\newtheorem{remark}[thm]{Remark}

\numberwithin{equation}{section}
%%

%% FONTS of individual letters and etc.

%% (short forms) "KITZURIM"

\newcommand{\ben}{\begin{enumerate}}

\newcommand{\een}{\end{enumerate}}
\newcommand{\bit}{\begin{itemize}}
\newcommand{\eit}{\end{itemize}}

%%another possibility to make short forms

%%%

%% Skips

%%

%Don't ask me please what is this :-)
%    Blank box placeholder for figures (to avoid requiring any
%    particular graphics capabilities for printing this document).

\begin{document}

\title
{A Symmetric Integral Identity for Bessel Functions with Applications to Integral Geometry}
\author{Yehonatan Salman \\ Weizmann Institute of Science \\ Email: salman.yehonatan@gmail.com}
\date{}
\maketitle

\begin{abstract}

In the article \cite{11} of L. Kunyansky a symmetric integral identity for Bessel functions of the first and second kind was proved in order to obtain an explicit inversion formula for the spherical mean transform where our data is given on the unit sphere in $\Bbb R^{n}$. The aim of this paper is to prove an analogous symmetric integral identity in case where our data for the spherical mean transform is given on an ellipse $E$ in $\Bbb R^{2}$. For this, we will use the recent results obtained by H.S. Cohl and H.Volkmer in \cite{7} for the expansions into eigenfunctions of Bessel functions of the first and second kind in elliptical coordinates.

\end{abstract}

\section{Introduction and Mathematical Background}

\subsection{Introduction}

\hskip0.6cm For a continuous function $f$, defined on $\Bbb R^{2}$, define the spherical mean transform, $Rf$ of $f$, by
\vskip-0.2cm
$$\hskip-8cm Rf:\Bbb R^{2}\times\Bbb R^{+}\rightarrow\Bbb R,$$
$$\hskip-5.5cm (Rf)(x, r) = \frac{1}{2\pi}\int_{-\pi}^{\pi}f\left(x + re^{i\theta}\right)d\theta$$
where $\Bbb R^{+}$ denotes the nonnegative ray $[0,\infty)$. Observe that for each point $(x, r)\in\Bbb R^{2}\times\Bbb R^{+}$ the spherical mean transform $Rf$ of $f$ evaluates the integral of $f$ on the circle with the center at $x$ and radius $r$.

The spherical mean transform was found to be an applicable tool in various research fields in mathematical physics and science (see \cite{1, 3, 5, 6, 8, 10, 12, 13, 14, 19}). Hence, in the last two decades many authors have investigated this integral transform with the aim of determining its kernel, range and its inverse transform $R^{-1}$. For the determination of the inverse transform, of course, one has to start with a well-posed problem in order to guarantee that $f$ can be uniquely recovered and such that no redundant data is available (i.e., that our problem is not over determined). For this, one has to restrict the domain of the spherical mean transform into a two dimensional surface in $\Bbb R^{2}\times\Bbb R^{+}$.

In this paper we will be dealing with the problem of finding the inverse transform of the spherical mean transform $R$, which is equivalent to the problem of recovering each function $f$ from its spherical mean transform $Rf$, in case where $Rf$ is restricted on a cylindrical surface $\Gamma\times\Bbb R^{+}$ where $\Gamma$ denotes a simple curve in $\Bbb R^{2}$. That is, one would like to reconstruct a continuous function $f$ in case where the integrals of $f$ are given over any circle with a center on $\Gamma$ and with an arbitrary radius. This reconstruction problem was found to be very important in many fields in science related to medical imaging such as photo and thermoacoustic tomography (\cite{1, 5, 10, 12, 14, 19}).

For the general $n$ dimensional case, inversion formulas for recovering a continuous function $f$ in $\Bbb R^{n}$, where $Rf$ is restricted to a cylindrical surface of the form $\Gamma\times\Bbb R^{+}$ with $\Gamma$ a hypersurface in $\Bbb R^{n}$, have been found in cases where $\Gamma$ is equal to various quadratic surfaces (\cite{2, 9, 11, 15, 16, 17, 20}). One of the classic inversion formulas for the case where $\Gamma$ is the unit sphere $\Bbb S^{n - 1}$ in $\Bbb R^{n}$ was obtained by L. Kunyansky in $\cite{11}$. The inversion formula in $\cite{11}$ relies on the following integral identity
\vskip-0.2cm
$$\hskip-3cm\int_{\Bbb S^{n - 1}}Y\left(\lambda\left|y - z\right|\right)\frac{\partial}{\partial n_{z}}J(\lambda|x - z|)ds(z)$$
\begin{equation}\hskip-3cm = \int_{\Bbb S^{n - 1}}Y\left(\lambda\left|x - z\right|\right)\frac{\partial}{\partial n_{z}}J(\lambda|y - z|)ds(z)\end{equation}
which is valid for $|x|, |y| < 1$, where $J(t) = J_{\frac{n}{2} - 1}(t) / t^{n / 2 - 1}, Y(t) = Y_{\frac{n}{2} - 1}(t) / t^{n / 2 - 1}$ and where $J_{\nu}$ and $Y_{\nu}$ denote respectively the Bessel functions of the first and second kind of order $\nu$. That is, if $\mathcal{I}(x, y)$ denotes the integral in the left hand side of equation (1.1) then $\mathcal{I}$ is a symmetric function in the variables $x$ and $y$, i.e., $\mathcal{I}(x, y) = \mathcal{I}(y, x)$. Restricting our discussion to the plane $\Bbb R^{2}$, identity (1.1) was proved in \cite{11} by using the following Fourier expansions
\begin{equation}\hskip-3.5cm J_{0}\left(\lambda\left|re^{i\phi} - Re^{i\theta}\right|\right) = \sum_{n = -\infty}^{\infty}J_{n}(\lambda r)J_{n}(\lambda R)e^{in(\theta - \phi)},\end{equation}
\begin{equation}\hskip-2cm Y_{0}\left(\lambda\left|\rho e^{i\psi} - Re^{i\theta}\right|\right) = \sum_{n = -\infty}^{\infty}J_{n}(\lambda \rho)Y_{n}(\lambda R)e^{in(\theta - \psi)}, \rho < R,\end{equation}
(see \cite[page 23, formula 4.55]{18}). Let $x = re^{i\phi}$ and $y = \rho e^{i\psi}$ be the polar representations of the points $x$ and $y$, then using identities (1.2) and (1.3) we obtain, by a straightforward computation, the following representation of $\mathcal{I}$:
$$\hskip-1.5cm \mathcal{I}(x, y) = \mathcal{I}\left(re^{i\phi}, \rho e^{i\psi}\right) = \lambda\sum_{n = -\infty}^{\infty}J_{n}(\lambda r)J_{n}(\lambda \rho)Y_{n}(\lambda)J_{n}'(\lambda)e^{in(\psi - \phi)}$$
\begin{equation} = \lambda J_{0}(\lambda r)J_{0}(\lambda\rho)Y_{0}(\lambda)J_{0}'(\lambda) + 2\lambda\sum_{n = 1}^{\infty}J_{n}(\lambda r)J_{n}(\lambda \rho)Y_{n}(\lambda)J_{n}'(\lambda)\cos(n(\psi - \phi))\end{equation}
which is valid for $\rho, r < 1$. Thus, since $\mathcal{I}$ is symmetric with respect to the variables $r$ and $\rho$ and with respect to the variables $\psi$ and $\phi$ it follows that it is symmetric with respect to the variables $x$ and $y$.

Our main aim in this paper is to obtain an inversion formula for the spherical mean transform in case where the centers of the circles of integration are given on an ellipse $E$ in $\Bbb R^{2}$, rather than on a circle, by using proper modifications in the methods introduced in $\cite{11}$. For this, we will have to find analogous expansions into eigenfunctions for the Bessel functions of the first and second kind, like in (1.2) and (1.3), but in elliptical coordinates in order to obtain a similar representation, as in (1.4), for the function $\mathcal{I}$. This will allow us to prove a similar integral identity to (1.1) where now integration will be over an ellipse rather than on a circle (the case $n = 2$ in (1.1)). We will rely on recent results and methods introduced by H. S. Cohl and H. Volkmer in \cite{7} in order to obtain the corresponding expansions for the elliptical case.

Before formulating our main results, we will have to introduce some mathematical notations, definitions and special functions which will be used during the next sections in the text.\\

\subsection{Mathematical Background}

Denote by $\Bbb R^{2}$ the two dimensional Euclidean plane and by $\Bbb R^{+}$ the ray $[0, \infty)$. For any integer $n$ denote by $J_{n}$ and $Y_{n}$ respectively the Bessel functions of the first and second kind of order $n$.

For a fixed point $\xi_{0}$ in $\Bbb R^{+}$, denote by $\mathrm{E}_{\xi_{0}}$ the following ellipse
\begin{equation}\hskip-4cm \mathrm{E}_{\xi_{0}} = \left\{(x_{1},x_{2})\in\Bbb R^{2}:\frac{x_{1}^{2}}{\cosh^{2}\xi_{0}} + \frac{x_{2}^{2}}{\sinh^{2}\xi_{0}} = 1\right\}\end{equation}
in $\Bbb R^{2}$. For a point $x\in\Bbb R^{2}$ denote by $x(\xi,\eta)$ its representation in elliptical coordinates:
\begin{equation}x(\xi, \eta) = (\cosh(\xi)\cos(\eta), \sinh(\xi)\sin(\eta)), (\xi, \eta)\in\Bbb R^{2}.\end{equation}
Observe that $x(\xi, \eta) = x(\xi, \eta + 2m\pi)$ for $m\in\Bbb Z$ and $x(\xi, \eta) = x(-\xi, -\eta)$. Thus, equation (1.6) does not represent the point $x$ in a unique way. We can get a bijective map in equation (1.6) by restricting the pair of variables $(\xi, \eta)$ to the domain $(0,\infty)\times[-\pi,\pi)$ which will be necessary, for example, when using the method of change of variables when performing integration. However, in general we will not assume that this is the case.

In elliptical coordinates the Helmholtz equation
\vskip-0.2cm
$$\hskip-7cm(\triangle_{x} + k^{2})U(x) = 0$$
is given by
\vskip-0.2cm
$$\hskip-2.5cm\left(\frac{\partial^{2}}{\partial\xi^{2}} + \frac{\partial^{2}}{\partial\eta^{2}} + k^{2}(\cosh^{2}\xi - \cos^{2}\eta)\right)u(\xi, \eta) = 0$$
where $u(\xi, \eta) = U\left(x(\xi, \eta)\right).$ Using separation of variables $u(\xi, \eta) = u_{1}(\xi)u_{2}(\eta)$ yields the following two equations
\vskip-0.2cm
\begin{equation}\hskip-4cm - u_{1}^{''}(\xi) + \left(\lambda - \frac{k^{2}}{2}\cosh(2\xi)\right)u_{1}(\xi) = 0,\end{equation}
\begin{equation}\hskip-4cm u_{2}^{''}(\eta) + \left(\lambda - \frac{k^{2}}{2}\cos(2\eta)\right)u_{2}(\eta) = 0.\end{equation}
The complete system of eigenfunctions to equation (1.8) is given by
$$\hskip-3cm\Theta = \left\{\textrm{ce}_{n}\left(\eta, \frac{k^{2}}{4}\right)\right\}_{n = 0}^{\infty}\bigcup\left\{\textrm{se}_{n}\left(\eta, \frac{k^{2}}{4}\right)\right\}_{n = 1}^{\infty}$$
with corresponding eigenvalues $\lambda = a_{n}(k), n\geq0$ and $\lambda = b_{n}(k), n\geq 1$. The functions $\textrm{ce}_{n}$ and $\textrm{se}_{n}$ are $2\pi$ periodic with respect to the variable $\eta$ and are respectively even and odd with respect to $\eta$ (\cite[Section 4]{7}). The family of functions $\Theta$ is orthogonal with respect to integration on the interval $[-\pi,\pi)$ in the variable $\eta$ and it can be uniquely determined by assuming the following conditions
\vskip-0.2cm
$$\hskip-6.5cm \int_{-\pi}^{\pi}\textrm{ce}_{n}^{2}\left(\eta, \frac{k^{2}}{4}\right)d\eta = \pi, n\geq0,$$
$$\hskip-6.5cm \int_{-\pi}^{\pi}\textrm{se}_{n}^{2}\left(\eta, \frac{k^{2}}{4}\right)d\eta = \pi, n\geq1.$$
The determination of the sign of $\textrm{ce}_{n}$ and $\textrm{se}_{n}$ is solved by choosing
$$\hskip-2.5cm\textrm{ce}_{0}(z,0) = \frac{1}{\sqrt{2}}, \textrm{ce}_{n}(z,0) = \cos(nz), \textrm{se}_{n}(z,0) = \sin(nz), n\geq1$$
in case where $k = 0$ and for other values of $k$ the sign is obtained by the continuity of $\textrm{ce}_{n}, \textrm{se}_{n}$ with respect to $k$ together with their values at $k = 0$.

If $f(z)$ is a $2\pi$ periodic complex function which is analytic in an open strip $S$ containing the real axis then $f$ has the following expansion
\begin{equation}f(z) = \alpha_{0}\textrm{ce}_{0}\left(z, \frac{k^{2}}{4}\right) + \sum_{n = 1}^{\infty}\left(\alpha_{n}\textrm{ce}_{n}\left(z, \frac{k^{2}}{4}\right) + \beta_{n}\textrm{se}_{n}\left(z, \frac{k^{2}}{4}\right)\right)\end{equation}
where
\vskip-0.2cm
$$\hskip-0.5cm\alpha_{n} = \frac{1}{\pi}\int_{-\pi}^{\pi}f(\eta)\textrm{ce}_{n}\left(\eta, \frac{k^{2}}{4}\right)d\eta, \beta_{n} = \frac{1}{\pi}\int_{-\pi}^{\pi}f(\eta)\textrm{se}_{n}\left(\eta, \frac{k^{2}}{4}\right)d\eta$$
and where the series (1.9) converges absolutely and uniformly on any compact subset of $S$ (see \cite[page 664, formula 28.11.1]{4}).

For equation (1.7) we will introduce the following solutions (\cite[page 667, formulas 28.20.15-16]{4})
$$\hskip-2.7cm\textrm{Mc}_{n}^{(3)}\left(\xi, \frac{k}{2}\right), n = 0, 1, 2,...,\hskip0.3cm \textrm{Ms}_{n}^{(3)}\left(\xi, \frac{k}{2}\right), n = 1, 2,...$$
which have the following asymptotic behavior (\cite[page 667, formula 28.20.9]{4})
\begin{equation}\hskip-0.55cm\textrm{Mc}_{n}^{(3)}\left(z, \frac{k}{2}\right), \textrm{Ms}_{n}^{(3)}\left(z, \frac{k}{2}\right) \approx H_{n}\left(k\cosh z\right)\left(1 + O(\textrm{sech}\hskip0.05cm z)\right), \Re z \rightarrow \infty, \left|\Im(z)\right|\leq\frac{\pi}{2}\end{equation}
where $H_{n}$ is the Hankel function of order $n$. We will also introduce the following solutions (\cite[page 667, formulas 28.20.15-16]{4})
$$\hskip-2.6cm\textrm{Mc}_{n}^{(1)}\left(\xi, \frac{k}{2}\right), n = 0, 1, 2,...,\hskip0.3cm \textrm{Ms}_{n}^{(1)}\left(\xi, \frac{k}{2}\right), n = 1, 2,...$$
which are even and odd respectively with respect to the variable $\xi$ and which have the following asymptotic behavior (\cite[page 667, formula 28.20.11]{4})
$$\hskip-9cm\textrm{Mc}_{n}^{(1)}\left(z, \frac{k}{2}\right), \textrm{Ms}_{n}^{(1)}\left(z, \frac{k}{2}\right)$$
\begin{equation} \hskip-2.2cm\approx J_{n}\left(k\cosh z\right) + e^{\left|\Im\left(k\cosh z\right)\right|}O\left(\left(\cosh z\right)^{-\frac{3}{2}}\right), \Re z \rightarrow \infty, \left|\Im(z)\right|\leq\frac{\pi}{2}.\end{equation}

For every $k > 0$ define the following functions
$$\hskip-7cm u_{k}, v_{k}:\Bbb R^{2}\times\Bbb R^{2}\rightarrow\Bbb R$$
by
\begin{equation}\hskip-3.5cm u_{k}(\xi, \eta, \xi', \eta') = J_{0}\left(k\left|x(\xi, \eta) - x(\xi', \eta')\right|\right),\end{equation}
\begin{equation}\hskip-3.5cm v_{k}(\xi, \eta, \xi', \eta') = Y_{0}\left(k\left|x(\xi, \eta) - x(\xi', \eta')\right|\right).\end{equation}

\section{Main Results}

The first main result of this paper, Theorem 2.1, is an explicit inversion formula of the spherical mean transform where our data is given on the cylindrical surface $\mathrm{E}_{\xi_{0}}\times\Bbb R^{+}$. Observe that if the spherical mean transform $Rf$ of a function $f$ is given on the cylindrical surface $\mathrm{E}_{\xi_{0}}\times\Bbb R^{+}$ then this is equivalent that we have the following data
\vskip-0.2cm
$$\hskip-4cm F(\eta, r) = (Rf)(x(\xi_{0}, \eta), r), (\eta, r)\in[-\pi,\pi]\times[0,\infty)$$
and we have to express $f$ via $F$. In Theorem 2.1 we show how to recover a continuous function $f$ from its spherical mean transform in case where the circles of integration have centers on the ellipse $\mathrm{E}_{\xi_{0}}$. The idea behind the proof of Theorem 2.1 follows the same method which was introduced in $\cite{11}$ where we use an analogous symmetric integral identity to (1.1) where integration in our case will be over the ellipse $\mathrm{E}_{\xi_{0}}$. This symmetric integral identity is an important result by itself and hence will be considered as the second main result of this paper. The exact formulation of the second main result is given in Theorem 2.2.

\begin{thm}

Let $\xi_{0} > 0$ and let $f$ be a continuous function compactly supported inside the ellipse $\mathrm{E}_{\xi_{0}}$. Then, for any $x$ inside $\mathrm{E}_{\xi_{0}}$ or equivalently for any point $x(\xi, \eta)$ such that $0 < \xi < \xi_{0}$ we have

$$\hskip-12cm f(x(\xi,\eta))$$
$$\hskip-2.5cm = \frac{1}{4}\int_{0}^{\infty}\int_{-\pi}^{\pi}\partial_{3}u_{k}(\xi, \eta, \xi_{0}, \eta'')\int_{0}^{\infty}(Rf)(x(\xi_{0}, \eta''), r)Y_{0}\left(kr\right) rdr d\eta''kdk$$
$$\hskip-2.4cm - \frac{1}{4}\int_{0}^{\infty}\int_{-\pi}^{\pi}\partial_{3}v_{k}(\xi, \eta, \xi_{0}, \eta'')\int_{0}^{\infty}(Rf)(x(\xi_{0}, \eta''), r)J_{0}\left(kr\right) rdr d\eta''kdk.$$

\end{thm}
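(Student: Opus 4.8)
The plan is to follow the scheme of \cite{11}, using the symmetric identity of Theorem 2.2 as the engine and reducing everything to the Fourier--Bessel completeness relation. First I would rewrite the two innermost radial integrals as planar integrals of $f$. Writing $z=x(\xi_0,\eta'')$ and passing to polar coordinates centred at $z$, the definition of $Rf$ gives
\[
\int_0^\infty (Rf)(z,r)\,Y_0(kr)\,r\,dr=\frac{1}{2\pi}\int_{\mathbb R^2}f(w)\,Y_0\!\left(k|z-w|\right)dw ,
\]
and the same with $J_0$ in place of $Y_0$. Next I would identify $\int_{-\pi}^{\pi}\partial_3(\cdot)\,d\eta''$ with an outward normal--derivative integral over $\mathrm E_{\xi_0}$: the coordinates (1.6) are orthogonal with equal scale factors $|x_\xi|=|x_\eta|=:h$, so the unit outward normal on $\mathrm E_{\xi_0}$ is $h^{-1}x_\xi$ and hence $\partial_{\xi'}g\,d\eta''=\partial_{n_z}g\,ds(z)$ with $ds=h\,d\eta''$. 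After these substitutions and an application of Fubini (to be justified) the right--hand side of the statement becomes
\[
\frac{1}{8\pi}\int_0^\infty k\int_{\mathbb R^2}f(w)\,B(x,w,k)\,dw\,dk,\qquad x=x(\xi,\eta),
\]
where $B(x,w,k)=\int_{\mathrm E_{\xi_0}}\bigl[\partial_{n_z}J_0(k|x-z|)\,Y_0(k|w-z|)-\partial_{n_z}Y_0(k|x-z|)\,J_0(k|w-z|)\bigr]\,ds(z)$.

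The heart of the proof is the closed evaluation of $B$. Here I would invoke Theorem 2.2 in the form $\int_{\mathrm E_{\xi_0}}Y_0(k|w-z|)\,\partial_{n_z}J_0(k|x-z|)\,ds(z)=\int_{\mathrm E_{\xi_0}}Y_0(k|x-z|)\,\partial_{n_z}J_0(k|w-z|)\,ds(z)$ to interchange $x$ and $w$ in the first summand of $B$. After this swap, $B$ is exactly the Green's second--identity expression $\int_{\mathrm E_{\xi_0}}(u\,\partial_{n_z}v-v\,\partial_{n_z}u)\,ds$ for $u=Y_0(k|x-z|)$ and $v=J_0(k|w-z|)$. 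Since $J_0(k|{\cdot}-w|)$ is an entire, singularity--free solution of $(\Delta+k^2)U=0$, while $\tfrac14 Y_0(k|x-{\cdot}|)$ is a fundamental solution of $\Delta+k^2$ whose logarithmic singularity sits at the interior point $z=x$, applying Green's identity on the region enclosed by $\mathrm E_{\xi_0}$ (excising a small disc about $z=x$ and letting its radius tend to $0$) collapses $B$ to a constant multiple of $J_0(k|x-w|)$. It is precisely at this step that the hypothesis $0<\xi<\xi_0$ enters: $x$ must lie strictly inside $\mathrm E_{\xi_0}$ for its singularity to be captured by the domain integral.

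Finally I would substitute $B(x,w,k)=c\,J_0(k|x-w|)$ back in; the $w$--integral turns into a multiple of $\int_{\mathbb R^2}f(w)\,J_0(k|x-w|)\,dw$, and the Fourier--Bessel completeness relation $\tfrac{1}{2\pi}\int_0^\infty J_0(k|x-w|)\,k\,dk=\delta(x-w)$ reproduces $f(x)$. The numerical factors---the $\tfrac{1}{2\pi}$ from the mean--value representation, the jump constant of the fundamental solution, and the $\tfrac{1}{2\pi}$ from completeness---combine to the coefficient $\tfrac14$ recorded in the statement (the overall sign being fixed by the orientation, inward versus outward, adopted for $\partial_3$). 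I expect the main obstacle to be analytic rather than algebraic: the outer integral $\int_0^\infty(\cdots)\,k\,dk$ is only conditionally, i.e. distributionally, convergent for a merely continuous and compactly supported $f$, so the interchanges of the $k$--, $w$-- and boundary integrals and the closing use of the completeness relation must be justified by a regularization (for instance inserting a factor $e^{-\varepsilon k}$ and passing to the limit $\varepsilon\to 0^+$), alongside the excision argument needed for the singular Green's identity.
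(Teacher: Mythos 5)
Your proposal is correct and is essentially the paper's own proof run in reverse: the three ingredients you combine --- Green's identity over the ellipse with the fundamental-solution normalization of $Y_{0}$ (the paper's Lemma 4.1), the symmetry of Theorem 2.2, and the Fourier--Bessel completeness relation (the paper's Lemma 4.3) --- are exactly the ones the paper uses, except that it starts from $(Gf)(x,k)=\int f(y)J_{0}(k|x-y|)\,dy$, expands $u_{k}$ by Lemma 4.1 and derives the stated right-hand side, whereas you collapse the right-hand side to $f(x)$. Your translation of $\partial_{3}(\cdot)\,d\eta''$ into $\partial_{n_{z}}(\cdot)\,ds(z)$ via the equal scale factors and your constant bookkeeping ($\tfrac{1}{8\pi}\cdot 4=\tfrac{1}{2\pi}$, then completeness) agree with the paper's elliptic-coordinate computation, and your remark on regularizing the outer $k$-integral is a point the paper passes over silently.
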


\begin{thm}

Let $\xi, \xi'$ and $\xi_{0}$ be three real numbers satisfying $0 < \xi, \xi' < \xi_{0}$. Then, the following integral
\vskip-0.2cm
\begin{equation}\hskip-4cm \mathcal{I}(\xi, \eta, \xi', \eta') = \int_{-\pi}^{\pi}u_{k}(\xi, \eta, \xi_{0}, \eta'')\partial_{3}v_{k}(\xi', \eta', \xi_{0}, \eta'')d\eta''\end{equation}
is a symmetric function in the variables $(\xi, \eta)$ and $(\xi', \eta')$. That is, $\mathcal{I}(\xi, \eta, \xi', \eta') = \mathcal{I}(\xi', \eta', \xi, \eta)$.

\end{thm}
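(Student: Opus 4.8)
The plan is to follow exactly the strategy used for the circular case in (1.1)--(1.4), replacing the Fourier--Bessel expansions (1.2), (1.3) by their elliptical analogues coming from the Cohl--Volkmer expansions in \cite{7}. First I would record the eigenfunction expansions of $u_k$ and $v_k$ in elliptical coordinates. Since $u_k = J_0(k|\cdot|)$ is the entire (regular) solution of the Helmholtz equation, its expansion uses the regular radial Mathieu functions in both radial slots,
\[
u_k(\xi,\eta,\xi',\eta') = \sum_{n=0}^{\infty} a_n\, \mathrm{Mc}_n^{(1)}(\xi)\mathrm{Mc}_n^{(1)}(\xi')\,\mathrm{ce}_n(\eta)\mathrm{ce}_n(\eta') + \sum_{n=1}^{\infty} b_n\, \mathrm{Ms}_n^{(1)}(\xi)\mathrm{Ms}_n^{(1)}(\xi')\,\mathrm{se}_n(\eta)\mathrm{se}_n(\eta'),
\]
whereas $v_k = Y_0(k|\cdot|)$ carries a logarithmic singularity and, exactly as in (1.3), its expansion uses the regular function in the smaller radial variable and the third-kind (Hankel-type) function in the larger one, so that for $\xi' < \xi_0$,
\[
v_k(\xi',\eta',\xi_0,\eta'') = \sum_{n=0}^{\infty} a_n\, \mathrm{Mc}_n^{(1)}(\xi')\mathrm{Mc}_n^{(3)}(\xi_0)\,\mathrm{ce}_n(\eta')\mathrm{ce}_n(\eta'') + \sum_{n=1}^{\infty} b_n\, \mathrm{Ms}_n^{(1)}(\xi')\mathrm{Ms}_n^{(3)}(\xi_0)\,\mathrm{se}_n(\eta')\mathrm{se}_n(\eta'').
\]
(The fixed second parameter $k/2$ of each radial Mathieu function is suppressed, and the precise joining coefficients are those dictated by \cite{7}.)

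Next I would substitute these into the integral (2.1) defining $\mathcal{I}$. Applying $\partial_3$ to $v_k$ differentiates only its $\xi_0$-dependent factors $\mathrm{Mc}_n^{(3)}(\xi_0)$ and $\mathrm{Ms}_n^{(3)}(\xi_0)$, leaving the series structure intact. Multiplying the $u_k$-series (with its second point on $\mathrm{E}_{\xi_0}$, i.e. with radial variable $\xi_0$ and angle $\eta''$) by the differentiated $v_k$-series and integrating in $\eta''$ over $[-\pi,\pi]$, the orthogonality of $\Theta$ recorded in Section 1.2 --- namely $\int_{-\pi}^{\pi}\mathrm{ce}_n^2 = \int_{-\pi}^{\pi}\mathrm{se}_n^2 = \pi$, together with the vanishing of all mixed integrals by parity --- collapses the double sum into a single sum over $n$ and removes every $\eta''$-dependence. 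The outcome has the form
\[
\mathcal{I}(\xi,\eta,\xi',\eta') = \pi\sum_{n} C_n\, \mathrm{Mc}_n^{(1)}(\xi)\mathrm{ce}_n(\eta)\, \mathrm{Mc}_n^{(1)}(\xi')\mathrm{ce}_n(\eta') + \pi\sum_{n} D_n\, \mathrm{Ms}_n^{(1)}(\xi)\mathrm{se}_n(\eta)\, \mathrm{Ms}_n^{(1)}(\xi')\mathrm{se}_n(\eta'),
\]
where $C_n = a_n^2\,\mathrm{Mc}_n^{(1)}(\xi_0)[\mathrm{Mc}_n^{(3)}]'(\xi_0)$ and $D_n = b_n^2\,\mathrm{Ms}_n^{(1)}(\xi_0)[\mathrm{Ms}_n^{(3)}]'(\xi_0)$ depend only on $\xi_0$, $k$ and $n$. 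Each term is now a product of a factor in $(\xi,\eta)$ and the identical factor in $(\xi',\eta')$, so the expression is visibly invariant under $(\xi,\eta)\leftrightarrow(\xi',\eta')$; this is the asserted symmetry and is the exact elliptical counterpart of (1.4).

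The main obstacle is analytic rather than algebraic: justifying the two interchanges of limits with the infinite series. Pulling $\partial_3$ inside the $v_k$-series requires uniform convergence, near $\xi_0$, of the termwise-differentiated radial series, which I would control using the asymptotic estimates (1.10)--(1.11) for the radial Mathieu functions; pulling $\int_{-\pi}^{\pi}(\cdot)\,d\eta''$ inside the product of the two series requires the absolute and uniform convergence on $[-\pi,\pi]$ of the angular expansions, guaranteed by the convergence statement accompanying (1.9). Once these interchanges are licensed and the orthogonality of $\Theta$ is applied, the symmetry is immediate from the product structure of the surviving single sum, so the bulk of the write-up would go into verifying the hypotheses for term-by-term differentiation and integration, the algebra being a direct transcription of the circular argument.
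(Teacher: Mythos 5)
Your proposal is correct and follows essentially the same route as the paper: expand $u_k$ and $v_k$ into the Mathieu eigenfunction series (the paper's (3.5) and (3.4)), differentiate in $\xi_0$, integrate termwise in $\eta''$ using the orthogonality of $\{\mathrm{ce}_n\}\cup\{\mathrm{se}_n\}$, and read off the symmetry from the resulting product structure, exactly as in the paper's (3.6). The one caveat is that the expansion of $v_k$ with $\mathrm{Mc}_n^{(1)},\mathrm{Ms}_n^{(1)}$ in the smaller and $\mathrm{Mc}_n^{(3)},\mathrm{Ms}_n^{(3)}$ in the larger radial variable is not actually contained in \cite{7} (which treats $K_0$); the paper has to establish it separately as Lemma 4.2 via a Green's identity and a Wronskian/asymptotics argument, so a complete write-up would need that derivation rather than a citation.
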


\begin{remark}

Observe that the integral identity (2.1) is analogous to the integral identity (1.1) where now integration is over the ellipse $\mathrm{E}_{\xi_{0}}$ where the infinitesimal length measure $ds(z)$ is replaced by $d\eta''$.

\end{remark}

\section{Proofs of the Main Results}

\textbf{Proof of Theorem 2.1}: Let $\xi_{0}$ be a fixed positive real number and let $f$ be a continuous function with compact support inside the ellipse $\mathrm{E}_{\xi_{0}}$. Define the following function
\vskip-0.2cm
\begin{equation}\hskip-3cm(Gf)(x, k) = \int_{\overset{\circ}{\textrm{E}}_{\xi_{0}}}f(y)J_{0}(k\left|x - y\right|)dy, (x, k)\in \overset{\circ}{\textrm{E}}_{\xi_{0}}\times\Bbb R^{+}\end{equation}
where $\overset{\circ}{\textrm{E}}_{\xi_{0}}$ denotes the interior of $\textrm{E}_{\xi_{0}}$ and our aim is to recover $Gf$ from the spherical mean transform $Rf$. Using Lemma 4.3 we will immediately be able to recover the function $f$.

Let $x = x(\xi, \eta)$ be the representation of $x$ in elliptical coordinates and let us make the following change of variables $y = x(\xi', \eta')$ in (3.1) to obtain
\vskip-0.2cm
$$\hskip-10.5cm (Gf)(x(\xi, \eta), k)$$
$$\hskip-0.5cm = \int_{0}^{\xi_{0}}\int_{-\pi}^{\pi}f\left(x(\xi', \eta')\right)J_{0}(k\left|x(\xi, \eta) - x(\xi', \eta')\right|)\left(\cosh^{2}\xi' - \cos^{2}\eta'\right)d\xi'd\eta'$$
\begin{equation}\hskip-2.2cm = \int_{0}^{\xi_{0}}\int_{-\pi}^{\pi}f\left(x(\xi', \eta')\right)u_{k}\left(\xi, \eta, \xi', \eta'\right)\left(\cosh^{2}\xi' - \cos^{2}\eta'\right)d\xi'd\eta'.\end{equation}
Since $\xi, \xi' < \xi_{0}$ we have, by Lemma 4.1, that
\vskip-0.2cm
$$\hskip-8cm u_{k}\left(\xi, \eta, \xi', \eta'\right) = u_{k}\left(\xi', \eta', \xi, \eta\right)$$
$$ \hskip0.5cm = \frac{1}{4}\int_{-\pi}^{\pi}\left(\partial_{3}u_{k}(\xi, \eta, \xi_{0}, \eta'')v_{k}(\xi', \eta', \xi_{0}, \eta'') - u_{k}(\xi, \eta, \xi_{0}, \eta'')\partial_{3}v_{k}(\xi', \eta', \xi_{0}, \eta'')\right)d\eta''.$$
Inserting the last identity into equation (3.2) we obtain that
$$\hskip-11cm(Gf)(x(\xi, \eta), k)$$
$$ = \frac{1}{4}\int_{-\pi}^{\pi}\partial_{3}u_{k}(\xi, \eta, \xi_{0}, \eta'')\int_{0}^{\xi_{0}}\int_{-\pi}^{\pi}f\left(x(\xi', \eta')\right)v_{k}(\xi', \eta', \xi_{0}, \eta'')\left(\cosh^{2}\xi' - \cos^{2}\eta'\right)d\xi'd\eta'd\eta''$$
$$ - \frac{1}{4}\int_{-\pi}^{\pi}u_{k}(\xi, \eta, \xi_{0}, \eta'')\int_{0}^{\xi_{0}}\int_{-\pi}^{\pi}f\left(x(\xi', \eta')\right)\partial_{3}v_{k}(\xi', \eta', \xi_{0}, \eta'')\left(\cosh^{2}\xi' - \cos^{2}\eta'\right)d\xi'd\eta'd\eta''$$
\begin{equation}\hskip-12cm = \mathcal{I}_{1} + \mathcal{I}_{2}.\end{equation}
By changing the order of integration in the integral $\mathcal{I}_{2}$, using the definition of the integral $\mathcal{I}$ in equation (2.1) and the fact that $\xi, \xi' < \xi_{0}$ we have
$$\hskip-2.5cm\mathcal{I}_{2} = -\frac{1}{4}\int_{0}^{\xi_{0}}\int_{-\pi}^{\pi}f\left(x(\xi', \eta')\right)\mathcal{I}(\xi, \eta, \xi', \eta')\left(\cosh^{2}\xi' - \cos^{2}\eta'\right)d\xi'd\eta'$$
$$\hskip-2cm = -\frac{1}{4}\int_{0}^{\xi_{0}}\int_{-\pi}^{\pi}f\left(x(\xi', \eta')\right)\mathcal{I}(\xi', \eta', \xi, \eta)\left(\cosh^{2}\xi' - \cos^{2}\eta'\right)d\xi'd\eta'$$
$$\hskip-8.8cm = - \frac{1}{4}\int_{-\pi}^{\pi}\partial_{3}v_{k}(\xi, \eta, \xi_{0}, \eta'')$$
$$\hskip-1cm\times\int_{0}^{\xi_{0}}\int_{-\pi}^{\pi}f\left(x(\xi', \eta')\right)u_{k}(\xi', \eta', \xi_{0}, \eta'')\left(\cosh^{2}\xi' - \cos^{2}\eta'\right)d\xi'd\eta'd\eta''$$
where in the first passage we used Theorem 2.2 which guarantees that the integral $\mathcal{I}$ is a symmetric function in the variables $(\xi, \eta)$ and $(\xi', \eta')$. Hence, returning to equation (3.3) we obtain that
\vskip-0.2cm
$$\hskip-11cm(Gf)(x(\xi, \eta), k)$$
$$ = \frac{1}{4}\int_{-\pi}^{\pi}\partial_{3}u_{k}(\xi, \eta, \xi_{0}, \eta'')\int_{0}^{\xi_{0}}\int_{-\pi}^{\pi}f\left(x(\xi', \eta')\right)v_{k}(\xi', \eta', \xi_{0}, \eta'')\left(\cosh^{2}\xi' - \cos^{2}\eta'\right)d\xi'd\eta'd\eta''$$
$$ - \frac{1}{4}\int_{-\pi}^{\pi}\partial_{3}v_{k}(\xi, \eta, \xi_{0}, \eta'')\int_{0}^{\xi_{0}}\int_{-\pi}^{\pi}f\left(x(\xi', \eta')\right)u_{k}(\xi', \eta', \xi_{0}, \eta'')\left(\cosh^{2}\xi' - \cos^{2}\eta'\right)d\xi'd\eta'd\eta''$$
$$ \hskip-11cm = \frac{1}{4}\int_{-\pi}^{\pi}\partial_{3}u_{k}(\xi, \eta, \xi_{0}, \eta'')$$
$$\hskip-1cm\times\int_{0}^{\xi_{0}}\int_{-\pi}^{\pi}f\left(x(\xi', \eta')\right)Y_{0}\left(k\left|x(\xi', \eta') - x(\xi_{0}, \eta'')\right|\right)\left(\cosh^{2}\xi' - \cos^{2}\eta'\right)d\xi'd\eta'd\eta''$$
$$ \hskip-11cm - \frac{1}{4}\int_{-\pi}^{\pi}\partial_{3}v_{k}(\xi, \eta, \xi_{0}, \eta'')$$
$$\hskip-1cm\times\int_{0}^{\xi_{0}}\int_{-\pi}^{\pi}f\left(x(\xi', \eta')\right)J_{0}\left(k\left|x(\xi', \eta') - x(\xi_{0}, \eta'')\right|\right)\left(\cosh^{2}\xi' - \cos^{2}\eta'\right)d\xi'd\eta'd\eta''$$
$$\hskip-6cm = \left[x = x(\xi', \eta'), dx = \left(\cosh^{2}\xi' - \cos^{2}\eta'\right)d\xi'd\eta'\right]$$
$$\hskip-4cm = \frac{1}{4}\int_{-\pi}^{\pi}\partial_{3}u_{k}(\xi, \eta, \xi_{0}, \eta'')\int_{\Bbb R^{2}}f(x)Y_{0}\left(k\left|x - x(\xi_{0}, \eta'')\right|\right) dxd\eta''$$
$$\hskip-4cm  - \frac{1}{4}\int_{-\pi}^{\pi}\partial_{3}v_{k}(\xi, \eta, \xi_{0}, \eta'')\int_{\Bbb R^{2}}f(x)J_{0}\left(k\left|x - x(\xi_{0}, \eta'')\right|\right) dxd\eta''$$
$$\hskip-8cm = \left[x = x(\xi_{0}, \eta'') + re^{i\theta}, dx = rd\theta dr\right]$$
$$\hskip-2cm = \frac{1}{4}\int_{-\pi}^{\pi}\partial_{3}u_{k}(\xi, \eta, \xi_{0}, \eta'')\int_{0}^{\infty}\int_{-\pi}^{\pi}f\left(x(\xi_{0}, \eta'') + re^{i\theta}\right)Y_{0}\left(kr\right) rdrd\theta d\eta''$$
$$\hskip-2cm - \frac{1}{4}\int_{-\pi}^{\pi}\partial_{3}v_{k}(\xi, \eta, \xi_{0}, \eta'')\int_{0}^{\infty}\int_{-\pi}^{\pi}f\left(x(\xi_{0}, \eta'') + re^{i\theta}\right)J_{0}\left(kr\right) rdrd\theta d\eta''$$
$$\hskip-3.5cm = \frac{\pi}{2}\int_{-\pi}^{\pi}\partial_{3}u_{k}(\xi, \eta, \xi_{0}, \eta'')\int_{0}^{\infty}(Rf)(x(\xi_{0}, \eta''), r)Y_{0}\left(kr\right) rdr d\eta''$$
$$\hskip-3.4cm - \frac{\pi}{2}\int_{-\pi}^{\pi}\partial_{3}v_{k}(\xi, \eta, \xi_{0}, \eta'')\int_{0}^{\infty}(Rf)(x(\xi_{0}, \eta''), r)J_{0}\left(kr\right) rdr d\eta''.$$
Now, using Lemma 4.3 and the definition (3.1) of the function $Gf$ we immediately obtain Theorem 2.1.\\\\
\textbf{Proof of Theorem 2.2}: To simplify notation during the proof of Theorem 2.2 we will use the notation $\textrm{se}_{0}$ where we define $\textrm{se}_{0}\equiv0$. From Lemma 4.2 it follows that $v_{k}$ has the following expansion
\vskip-0.2cm
$$\hskip-11cm v_{k}(\xi', \eta', \xi_{0}, \eta'')$$
$$ \hskip-1.7cm = 2\pi i\sum_{n = 0}^{\infty}\left[\textrm{Mc}_{n}^{(1)}\left(\xi', \frac{k}{2}\right)\textrm{ce}_{n}\left(\eta', \frac{k^{2}}{4}\right)\textrm{Mc}_{n}^{(3)}\left(\xi_{0}, \frac{k}{2}\right)\textrm{ce}_{n}\left(\eta'', \frac{k^{2}}{4}\right)\right.$$
\begin{equation} \hskip-0.4cm \left. + \textrm{Ms}_{n}^{(1)}\left(\xi', \frac{k}{2}\right)\textrm{se}_{n}\left(\eta', \frac{k^{2}}{4}\right)\textrm{Ms}_{n}^{(3)}\left(\xi_{0}, \frac{k}{2}\right)\textrm{se}_{n}\left(\eta'', \frac{k^{2}}{4}\right)\right]\end{equation}
which is valid in case where $|\xi'| < \xi_{0}$. Also, it was proved in \cite[Theorem 4.2]{7} that $u_{k}$ has the following expansion
\vskip-0.2cm
$$\hskip-11cm u_{k}(\xi, \eta, \xi_{0}, \eta'')$$
$$ \hskip-0.8cm = \frac{1}{\pi}\sum_{n = 0}^{\infty}\left[\mu_{n}\left(\frac{k^{2}}{4}\right)\textrm{ce}_{n}\left(i\xi, \frac{k^{2}}{4}\right)\textrm{ce}_{n}\left(\eta, \frac{k^{2}}{4}\right)\textrm{ce}_{n}\left(i\xi_{0}, \frac{k^{2}}{4}\right)\textrm{ce}_{n}\left(\eta'', \frac{k^{2}}{4}\right)\right.$$
\begin{equation}\hskip0.6cm \left. + \nu_{n}\left(\frac{k^{2}}{4}\right)\textrm{se}_{n}\left(i\xi, \frac{k^{2}}{4}\right)\textrm{se}_{n}\left(\eta, \frac{k^{2}}{4}\right)\textrm{se}_{n}\left(i\xi_{0}, \frac{k^{2}}{4}\right)\textrm{se}_{n}\left(\eta'', \frac{k^{2}}{4}\right)\right]\end{equation}
where $\mu_{n}$ and $\nu_{n}$ are functions which depend only on the variable $k$. Observe that the expansions (3.4) and (3.5) are analogous to the expansions (1.2) and (1.3) of the Bessel functions of the first and second kind of order zero where in (3.4) and (3.5) we use expansions into eigenfunctions in elliptical coordinates rather than expansions into Fourier series. In \cite{4} the following identities were shown
$$\hskip-4.5cm\textrm{Mc}_{n}^{(1)}\left(\xi', \frac{k}{2}\right) = \rho_{n,1}(k)\textrm{ce}_{n}\left(i\xi', \frac{k^{2}}{4}\right),$$
$$\hskip-4.5cm\textrm{Ms}_{n}^{(1)}\left(\xi', \frac{k}{2}\right) = \rho_{n,2}(k)\textrm{se}_{n}\left(i\xi', \frac{k^{2}}{4}\right)$$
where the functions $\rho_{n,i}, i = 1,2$ are given explicitly in \cite[page 669, formulas 28.22.1-2]{4}. Now, differentiating $v_{k}$ with respect to $\xi_{0}$, multiplying it with $u_{k}$ and then integrating on $\eta''\in[-\pi,\pi]$ we obtain, after using the orthogonality of the system $\{\textrm{ce}_{n}\}_{n = 0}^{\infty}\cup\{\textrm{se}_{n}\}_{n = 1}^{\infty}$, that
\vskip-0.2cm
$$\hskip-12.2cm \mathcal{I}(\xi, \eta, \xi', \eta') $$
$$ \hskip0.3cm = 2\pi i\sum_{n = 0}^{\infty}\left[\mu_{n}\left(\frac{k^{2}}{4}\right)\rho_{n, 1}(k)\textrm{ce}_{n}\left(i\xi, \frac{k^{2}}{4}\right)\textrm{ce}_{n}\left(i\xi', \frac{k^{2}}{4}\right)\textrm{ce}_{n}\left(\eta, \frac{k^{2}}{4}\right)\textrm{ce}_{n}\left(\eta', \frac{k^{2}}{4}\right)\right.$$
$$\hskip0.6cm\times \textrm{ce}_{n}\left(i\xi_{0}, \frac{k^{2}}{4}\right)\partial_{\xi_{0}}\textrm{Mc}_{n}^{(3)}\left(\xi_{0}, \frac{k}{2}\right)$$
$$\hskip1.5cm + \nu_{n}\left(\frac{k^{2}}{4}\right)\rho_{n, 2}(k)\textrm{se}_{n}\left(i\xi, \frac{k^{2}}{4}\right)\textrm{se}_{n}\left(i\xi', \frac{k^{2}}{4}\right)\textrm{se}_{n}\left(\eta, \frac{k^{2}}{4}\right)\textrm{se}_{n}\left(\eta', \frac{k^{2}}{4}\right)$$
\begin{equation}\left.\hskip0.8cm\times \textrm{se}_{n}\left(i\xi_{0}, \frac{k^{2}}{4}\right)\partial_{\xi_{0}}\textrm{Ms}_{n}^{(3)}\left(\xi_{0}, \frac{k}{2}\right)\right].\end{equation}
\vskip0.2cm
Thus, from the last sum it clearly follows that $\mathcal{I}$ is a symmetric function in the variables $(\xi, \eta)$ and $(\xi', \eta')$.

\begin{remark}

Observe that the expansion (3.6) is analogous to the expansion (1.4) of the integral $\mathcal{I}$ from the circular case to the elliptical case.

\end{remark}

\section{Appendix}

\begin{lem}

Let $(\xi_{0}, \eta_{0})$ and $(\xi_{1}, \eta_{1})$ be two points in $\Bbb R^{2}$ and let $\xi_{2}$ be a positive real number such that $|\xi_{0}| < \xi_{2}$. Then, we have the following identity

$$\hskip-10.5cm u_{k}(\xi_{0}, \eta_{0}, \xi_{1}, \eta_{1})$$
$$ \hskip-6cm = \frac{1}{4}\int_{-\pi}^{\pi}\left( v_{k}(\xi_{2}, \eta, \xi_{0}, \eta_{0})\partial_{1}u_{k}(\xi_{2}, \eta, \xi_{1}, \eta_{1})\right.$$
$$ \hskip-3.5cm - \left.u_{k}(\xi_{2}, \eta, \xi_{1}, \eta_{1})\partial_{1}v_{k}(\xi_{2}, \eta, \xi_{0}, \eta_{0})\right)d\eta.$$

\end{lem}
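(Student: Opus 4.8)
The plan is to read the claimed formula as a Green representation identity over the interior of the ellipse $\mathrm{E}_{\xi_{2}}$, the two boundary terms being exactly what Green's second identity produces. Fix the two Euclidean points $p_{0}=x(\xi_{0},\eta_{0})$ and $p_{1}=x(\xi_{1},\eta_{1})$ and regard
$$U(x)=J_{0}(k|x-p_{1}|),\qquad V(x)=Y_{0}(k|x-p_{0}|)$$
as functions of the first argument. The two facts I would isolate first are: $U$ is an entire, everywhere-smooth solution of the homogeneous Helmholtz equation $(\triangle+k^{2})U=0$, since $J_{0}$ is even and entire so $J_{0}(k|x-p_{1}|)$ is smooth in $x$ even at $p_{1}$; while $V$ solves the same equation away from $p_{0}$ but carries a logarithmic singularity there, which distributionally reads $(\triangle+k^{2})V=4\,\delta_{p_{0}}$. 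The constant $4$ is precisely the source of the prefactor $\tfrac14$ in the statement: it comes from $Y_{0}(z)\sim\frac{2}{\pi}\log(z/2)$ near $z=0$ together with $\triangle\log|x|=2\pi\delta$ in the plane.

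Next I would apply Green's second identity $\int_{\Omega}(V\triangle U-U\triangle V)\,dx=\int_{\partial\Omega}(V\,\partial_{n}U-U\,\partial_{n}V)\,ds$ on the region $\Omega$ bounded by $\mathrm{E}_{\xi_{2}}$. Substituting the two equations makes the bulk term collapse: the two $-k^{2}UV$ contributions cancel identically, so only the singular contribution of $V$ survives and it evaluates $U$ at $p_{0}$. This is where the hypothesis $|\xi_{0}|<\xi_{2}$ enters: it is exactly the condition that $p_{0}$ lies strictly inside $\mathrm{E}_{\xi_{2}}$ (using $x(\xi,\eta)=x(-\xi,-\eta)$ to take $\xi_{0}\ge 0$), so the delta is captured. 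No condition on $p_{1}$ is needed, matching the hypotheses, because $U$ has no singularity.

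Then I would convert the boundary integral to the variable $\eta$ along $\xi=\xi_{2}$. In elliptical coordinates the two scale factors coincide, $h_{\xi}=h_{\eta}=\sqrt{\cosh^{2}\xi-\cos^{2}\eta}$, so they cancel in $\partial_{n}U\,ds=\tfrac{1}{h_{\xi}}\partial_{\xi}U\cdot h_{\eta}\,d\eta=\partial_{\xi}U\,d\eta$, the outward normal on $\mathrm{E}_{\xi_{2}}$ pointing in the direction of increasing $\xi$. Reading $\partial_{\xi}$ as $\partial_{1}$ acting on the first slot of $u_{k}$ and $v_{k}$, the outer boundary term becomes exactly $\int_{-\pi}^{\pi}\big(v_{k}(\xi_{2},\eta,\xi_{0},\eta_{0})\,\partial_{1}u_{k}(\xi_{2},\eta,\xi_{1},\eta_{1})-u_{k}(\xi_{2},\eta,\xi_{1},\eta_{1})\,\partial_{1}v_{k}(\xi_{2},\eta,\xi_{0},\eta_{0})\big)\,d\eta$, up to the constant $\tfrac14$ and an overall sign.

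The step I expect to be the main obstacle is the rigorous extraction of the point value together with its constant and sign, since $V$ is not smooth at $p_{0}$ and Green's identity cannot be applied naively. I would make this precise by excising a small disk $B_{\varepsilon}(p_{0})$, applying Green's identity on $\Omega\setminus B_{\varepsilon}(p_{0})$ where both functions are genuine $C^{2}$ solutions of the homogeneous equation (so the bulk integral vanishes), and then letting $\varepsilon\to 0$. Using $Y_{0}(k\varepsilon)\sim\frac{2}{\pi}\log\varepsilon$ and $Y_{0}'(z)=-Y_{1}(z)\sim\frac{2}{\pi z}$, the term $\int_{\partial B_{\varepsilon}}V\,\partial_{n}U\,ds$ is $O(\varepsilon\log\varepsilon)\to 0$, while $\int_{\partial B_{\varepsilon}}U\,\partial_{n}V\,ds$ tends to a nonzero multiple of $U(p_{0})=u_{k}(\xi_{0},\eta_{0},\xi_{1},\eta_{1})$; carefully tracking the orientation of the inner normal against the outward normal on $\mathrm{E}_{\xi_{2}}$ is what pins down the factor $\tfrac14$ and fixes the sign in the stated identity. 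Everything else is routine bookkeeping in elliptical coordinates.
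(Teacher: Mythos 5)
Your route is a genuinely different execution of the same underlying idea. You stay in the physical plane: Green's second identity for $U(x)=J_{0}(k|x-p_{1}|)$ and $V(x)=Y_{0}(k|x-p_{0}|)$ (with $p_{0}=x(\xi_{0},\eta_{0})$, $p_{1}=x(\xi_{1},\eta_{1})$) on the interior of $\mathrm{E}_{\xi_{2}}$ with a small disk about $p_{0}$ excised, followed by the observation $h_{\xi}=h_{\eta}$ so that $\partial_{n}\,ds=\partial_{\xi}\,d\eta$ along $\xi=\xi_{2}$. The paper instead transplants the fundamental-solution property of $Y_{0}$ into elliptical coordinates and applies Green's theorem on the coordinate rectangle $\{|\xi|<\xi_{2},\ -\pi\le\eta\le\pi\}$, where the map $(\xi,\eta)\mapsto x(\xi,\eta)$ is two-to-one: the singularity is collected twice, at $\pm(\xi_{0},\eta_{0})$, giving $8u_{k}$, while of the four edge integrals the two along $\eta=\pm\pi$ cancel by periodicity and the two along $\xi=\pm\xi_{2}$ coincide by evenness, which brings the constant back to $\tfrac14$. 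Your version is shorter, avoids the parity and periodicity bookkeeping, and makes transparent that no hypothesis on $(\xi_{1},\eta_{1})$ is needed; the paper's version avoids the excision limit by quoting the normalization of the fundamental solution from \cite{11}. Both need that $\eta\mapsto x(\xi_{2},\eta)$ parametrizes $\mathrm{E}_{\xi_{2}}$ bijectively on $[-\pi,\pi)$, which you use implicitly.

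The gap is precisely the step you postpone: extracting the point value together with its sign. With your normalization $(\triangle+k^{2})Y_{0}(k|\cdot-p_{0}|)=4\,\delta_{p_{0}}$ (which is the standard one, since $Y_{0}(z)=\tfrac{2}{\pi}\ln z+O(1)$ and $\triangle\ln|x|=2\pi\delta$), carrying the $\varepsilon\to0$ limit through gives $\int_{\partial B_{\varepsilon}}U\,\partial_{n}V\,ds\to-4U(p_{0})$ (the outward normal of the excised region points toward $p_{0}$, while $\partial_{r}Y_{0}(kr)\sim\tfrac{2}{\pi r}>0$), and the identity you actually obtain is
$$u_{k}(\xi_{0},\eta_{0},\xi_{1},\eta_{1})=\frac14\int_{-\pi}^{\pi}\Bigl(u_{k}(\xi_{2},\eta,\xi_{1},\eta_{1})\,\partial_{1}v_{k}(\xi_{2},\eta,\xi_{0},\eta_{0})-v_{k}(\xi_{2},\eta,\xi_{0},\eta_{0})\,\partial_{1}u_{k}(\xi_{2},\eta,\xi_{1},\eta_{1})\Bigr)\,d\eta,$$
with the two terms in the opposite order from the statement. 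The paper reaches the stated order because its proof starts from $(\triangle+k^{2})\int Y_{0}(k|x-y|)g(y)\,dy=-4g(x)$, quoted from \cite{11}; that normalization and yours differ by a sign and cannot both hold, so you cannot simply assert that the orientation bookkeeping ``fixes the sign in the stated identity.'' A quick test in the circular case (unit disk, both points at the origin), using the Wronskian $J_{0}(z)Y_{0}'(z)-J_{0}'(z)Y_{0}(z)=\tfrac{2}{\pi z}$, gives $\tfrac14\cdot 2\pi k\,\bigl(J_{0}(k)Y_{0}'(k)-Y_{0}(k)J_{0}'(k)\bigr)=1=J_{0}(0)$, which is consistent with your normalization and with the reversed ordering. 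So finish the excision computation explicitly and state the identity with the sign it produces, flagging the discrepancy with the statement (and with the $-4$ normalization used in the paper's proof) rather than asserting agreement.
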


\begin{proof}

The radial Bessel function of the second kind $Y_{0}(k|\cdot|)$ is a fundamental solution for the Helmholtz operator $\triangle_{x} + k^{2}$ in $\Bbb R^{2}$. That is, if
\begin{equation}\hskip-7cm G(x) = \int_{\Bbb R^{2}}Y_{0}(k|x - y|)g(y)dy,\end{equation}
then
\vskip-0.2cm
$$\hskip-7.7cm\left(\triangle_{x} + k^{2}\right)G(x) = -4g(x)$$
where $g$ is any continuous function in $\Bbb R^{2}$ with compact support (see \cite[page 3]{11}). In elliptical coordinates the Helmholtz operator has the form
\vskip-0.2cm
$$\hskip-3.5cm\triangle_{x} + k^{2} = \frac{1}{\cosh\xi^{2} - \cos^{2}\eta}\left(\frac{\partial^{2}}{\partial\xi^{2}} + \frac{\partial^{2}}{\partial\eta^{2}}\right) + k^{2}.$$
Hence, using the change of variables
\vskip-0.2cm
$$\hskip-4.3cm y = x(\xi', \eta'), dy = \left(\cosh^{2}\xi' - \cos^{2}\eta'\right)d\eta' d\xi'$$
in equation (4.1) and then taking the Helmholtz operator we obtain that
$$ \hskip-1cm - 4g\left(x(\xi, \eta)\right) = \left(\frac{1}{\cosh\xi^{2} - \cos^{2}\eta}\left(\frac{\partial^{2}}{\partial\xi^{2}} + \frac{\partial^{2}}{\partial\eta^{2}}\right) + k^{2}\right)G\left(x(\xi, \eta)\right)$$
$$ \hskip-5cm = \left(\frac{1}{\cosh\xi^{2} - \cos^{2}\eta}\left(\frac{\partial^{2}}{\partial\xi^{2}} + \frac{\partial^{2}}{\partial\eta^{2}}\right) + k^{2}\right)$$ $$\hskip-2cm\times\int_{0}^{\infty}\int_{-\pi}^{\pi}v_{k}(\xi, \eta, \xi', \eta')g(x(\xi', \eta'))\left(\cosh^{2}\xi' - \cos^{2}\eta'\right)d\eta' d\xi'$$
where we used equation (1.13) relating the function $v_{k}$ with $Y_{0}$. This implies that
\vskip-0.2cm
$$ \hskip-5.3cm = \left(\frac{\partial^{2}}{\partial\xi^{2}} + \frac{\partial^{2}}{\partial\eta^{2}} + k^{2}\left(\cosh\xi^{2} - \cos^{2}\eta\right)\right)$$
$$\hskip-2cm\times\int_{0}^{\infty}\int_{-\pi}^{\pi}v_{k}(\xi, \eta, \xi', \eta')g(x(\xi', \eta'))\left(\cosh^{2}\xi' - \cos^{2}\eta'\right)d\eta' d\xi'$$
$$ \hskip-6cm = -4g\left(x(\xi, \eta)\right)\left(\cosh\xi^{2} - \cos^{2}\eta\right).$$
Thus, $v_{k}(\xi, \eta, \xi', \eta')$ is a fundamental solution, at the point $(\xi', \eta')$, for the differential operator
\vskip-0.2cm
$$\hskip-4.7cm L_{\xi, \eta} = \frac{\partial^{2}}{\partial\xi^{2}} + \frac{\partial^{2}}{\partial\eta^{2}} + k^{2}\left(\cosh\xi^{2} - \cos^{2}\eta\right).$$
Let $(\xi_{0}, \eta_{0}), (\xi_{1}, \eta_{1})$ be two arbitrary points in $\Bbb R^{2}$ such that $|\eta_{0}| < \pi$. Let us use Green's Theorem for the functions $u_{k}\left(\partial_{2}v_{k}\right) - \left(\partial_{2}u_{k}\right)v_{k}$ and $\left(\partial_{1}u_{k}\right)v_{k} - u_{k}\left(\partial_{1}v_{k}\right)$, where
\vskip-0.2cm
$$\hskip-4.5cm u_{k} = u_{k}\left(\cdot, \cdot, \xi_{1}, \eta_{1}\right), v_{k} = v_{k}\left(\cdot, \cdot, \xi_{0}, \eta_{0}\right)$$
on the rectangle
\vskip-0.2cm
$$\hskip-5cm R = \left\{(\xi, \eta):|\xi| < \xi_{2}, - \pi\leq \eta\leq\pi\right\}$$
where we assume that $|\xi_{0}| < \xi_{2}$. Using the fact that $u_{k}$ is in the kernel of the operator $L_{\xi, \eta}$ (because of formula (1.12) and the fact that $J_{0}(k|\cdot|)$ is in the kernel of the Helmholtz operator $\triangle_{x} + k^{2}$) and fact that the rectangle $R$ contains the singularity points $\pm(\xi_{0}, \eta_{0})$ of $v_{k}$ we have
\vskip-0.2cm
$$ \hskip-3cm \oint_{\partial R}\left[u_{k}\left(\partial_{2}v_{k}\right) - \left(\partial_{2}u_{k}\right)v_{k}\right]d\xi + \left[\left(\partial_{1}u_{k}\right)v_{k} - u_{k}\left(\partial_{1}v_{k}\right)\right]d\eta$$
$$ \hskip-0.5cm = \int_{R}\left[\frac{\partial}{\partial\xi}\left(\frac{\partial u_{k}(\xi, \eta, \xi_{1}, \eta_{1})}{\partial\xi}v_{k}(\xi, \eta, \xi_{0}, \eta_{0}) - u_{k}(\xi, \eta, \xi_{1}, \eta_{1})\frac{\partial v_{k}(\xi, \eta, \xi_{0}, \eta_{0})}{\partial\xi}\right)\right.$$ $$\left.- \frac{\partial}{\partial\eta}\left(u_{k}(\xi, \eta, \xi_{1}, \eta_{1})\frac{\partial v_{k}(\xi, \eta, \xi_{0}, \eta_{0})}{\partial\eta} - \frac{\partial u_{k}(\xi, \eta, \xi_{1}, \eta_{1})}{\partial\eta}v_{k}(\xi, \eta, \xi_{0}, \eta_{0})\right)\right]d\xi d\eta$$
$$ \hskip-5cm = \int_{R}\left[\left(\frac{\partial^{2}}{\partial\xi^{2}} + \frac{\partial^{2}}{\partial\eta^{2}}\right)u_{k}(\xi, \eta, \xi_{1}, \eta_{1})v_{k}(\xi, \eta, \xi_{0}, \eta_{0})\right.$$ $$\hskip-4cm\left. - \left(\frac{\partial^{2}}{\partial\xi^{2}} + \frac{\partial^{2}}{\partial\eta^{2}}\right)v_{k}(\xi, \eta, \xi_{0}, \eta_{0})u_{k}(\xi, \eta, \xi_{1}, \eta_{1})\right]d\xi d\eta$$
$$ = -\int_{R}u_{k}(\xi, \eta, \xi_{1}, \eta_{1})\left(\frac{\partial^{2}}{\partial\xi^{2}} + \frac{\partial^{2}}{\partial\eta^{2}} + k^{2}\left(\cosh\xi^{2} - \cos^{2}\eta\right)\right)v_{k}(\xi, \eta, \xi_{0}, \eta_{0})d\xi d\eta$$
\begin{equation}\hskip-2.75cm = 4u_{k}(\xi_{0}, \eta_{0}, \xi_{1}, \eta_{1}) + 4u_{k}(-\xi_{0}, -\eta_{0}, \xi_{1}, \eta_{1}) = 8u_{k}(\xi_{0}, \eta_{0}, \xi_{1}, \eta_{1}).\end{equation}
On the other hand, we can write more explicitly
\vskip-0.2cm
$$ \hskip-2.5cm \oint_{\partial R}\left[u_{k}\left(\partial_{2}v_{k}\right) - \left(\partial_{2}u_{k}\right)v_{k}\right]d\xi + \left[\left(\partial_{1}u_{k}\right)v_{k} - u_{k}\left(\partial_{1}v_{k}\right)\right]d\eta$$
$$ \hskip-5.5cm = \int_{-\xi_{2}}^{\xi_{2}}\left(\partial_{2} v_{k}(\xi, - \pi, \xi_{0}, \eta_{0})u_{k}(\xi, - \pi, \xi_{1}, \eta_{1})\right.$$
$$ \hskip-4cm \left.- \partial_{2}u_{k}(\xi, - \pi, \xi_{1}, \eta_{1})v_{k}(\xi, - \pi, \xi_{0}, \eta_{0})\right)d\xi$$
$$ \hskip-5.8cm + \int_{-\pi}^{\pi}\left(v_{k}(\xi_{2}, \eta, \xi_{0}, \eta_{0})\partial_{1} u_{k}(\xi_{2}, \eta, \xi_{1}, \eta_{1})\right.$$
$$ \hskip-4cm \left.- u_{k}(\xi_{2}, \eta, \xi_{1}, \eta_{1})\partial_{1} v_{k}(\xi_{2}, \eta, \xi_{0}, \eta_{0})\right)d\eta$$
$$ \hskip-5.5cm - \int_{-\xi_{2}}^{\xi_{2}}\left(\partial_{2} v_{k}(\xi, \pi, \xi_{0}, \eta_{0})u_{k}(\xi, \pi, \xi_{1}, \eta_{1})\right.$$
$$ \hskip-4cm \left.- \partial_{2}u_{k}(\xi, \pi, \xi_{1}, \eta_{1})v_{k}(\xi, \pi, \xi_{0}, \eta_{0})\right)d\xi$$
$$ \hskip-4.7cm - \int_{-\pi}^{\pi}\left(v_{k}(-\xi_{2}, \eta, \xi_{0}, \eta_{0})\partial_{1} u_{k}(-\xi_{2}, \eta, \xi_{1}, \eta_{1})\right.$$
$$ \hskip-2.5cm \left.- u_{k}(-\xi_{2}, \eta, \xi_{1}, \eta_{1})\partial_{1} v_{k}(-\xi_{2}, \eta, \xi_{0}, \eta_{0})\right)d\eta$$
\begin{equation} \hskip-9cm = \mathcal{I}_{1} + \mathcal{I}_{2} + \mathcal{I}_{3} + \mathcal{I}_{4}.\end{equation}
Observe that since $v_{k}$ and $u_{k}$ are $2\pi$ periodic then it follows that the integrals $\mathcal{I}_{1}$ and $\mathcal{I}_{3}$ cancel each other. Now, we claim that $\mathcal{I}_{2} = \mathcal{I}_{4}$. Indeed, since both $u_{k}(\cdot, \cdot, \xi_{1}, \eta_{1})$ and $v_{k}(\cdot, \cdot, \xi_{0}, \eta_{0})$ are even functions it follows that
$$\hskip-5cm v_{k}(-\xi, \eta, \xi_{0}, \eta_{0}) = v_{k}(\xi, -\eta, \xi_{0}, \eta_{0}),$$
$$\hskip-5cm u_{k}(-\xi, \eta, \xi_{0}, \eta_{0}) = u_{k}(\xi, -\eta, \xi_{0}, \eta_{0}).$$
Hence, taking the derivative with respect to $\xi$ on both sides of the last two equations we obtain that
\vskip-0.2cm
$$\hskip-4cm -\partial_{1}v_{k}(-\xi, \eta, \xi_{0}, \eta_{0}) = \partial_{1}v_{k}(\xi, -\eta, \xi_{0}, \eta_{0}),$$
$$\hskip-4cm -\partial_{1}u_{k}(-\xi, \eta, \xi_{0}, \eta_{0}) = \partial_{1}u_{k}(\xi, -\eta, \xi_{0}, \eta_{0}).$$
Hence, we have
\vskip-0.2cm
$$\hskip-14cm \mathcal{I}_{4} $$
$$ \hskip0.2cm = - \int_{-\pi}^{\pi}\left(v_{k}(-\xi_{2}, \eta, \xi_{0}, \eta_{0})\partial_{1}u_{k}(-\xi_{2}, \eta, \xi_{1}, \eta_{1})
- u_{k}(-\xi_{2}, \eta, \xi_{1}, \eta_{1})\partial_{1}v_{k}(-\xi_{2}, \eta, \xi_{0}, \eta_{0})\right)d\eta$$
$$ \hskip0.2cm = \int_{-\pi}^{\pi}\left(v_{k}(\xi_{2}, -\eta, \xi_{0}, \eta_{0})\partial_{1}u_{k}(\xi_{2}, -\eta, \xi_{1}, \eta_{1})
- u_{k}(\xi_{2}, -\eta, \xi_{1}, \eta_{1})\partial_{1}v_{k}(\xi_{2}, -\eta, \xi_{0}, \eta_{0})\right)d\eta$$
Thus, by making the change of variables $\eta\mapsto -\eta$ in the last integral we see that both integrals $\mathcal{I}_{2}$ and $\mathcal{I}_{4}$ coincide. Combining equations (4.2) and (4.3) we obtain Lemma 4.1.

\end{proof}

\begin{lem}

Assume that $|\xi'| < \xi$. Then, the function $v_{k}$ has the following expansion

$$ \hskip-1cm v_{k}(\xi', \eta', \xi, \eta) = 2\pi i\mathrm{Mc}_{0}^{(1)}\left(\xi', \frac{k}{2}\right)\mathrm{ce}_{0}\left(\eta', \frac{k^{2}}{4}\right)\mathrm{Mc}_{0}^{(3)}\left(\xi, \frac{k}{2}\right)\mathrm{ce}_{0}\left(\eta, \frac{k^{2}}{4}\right)$$
$$ \hskip0.8cm + 2\pi i\sum_{n = 1}^{\infty}\left[\mathrm{Mc}_{n}^{(1)}\left(\xi', \frac{k}{2}\right)\mathrm{ce}_{n}\left(\eta', \frac{k^{2}}{4}\right)\mathrm{Mc}_{n}^{(3)}\left(\xi, \frac{k}{2}\right)\mathrm{ce}_{n}\left(\eta, \frac{k^{2}}{4}\right)\right.$$
$$ \hskip2.3cm \left. + \mathrm{Ms}_{n}^{(1)}\left(\xi', \frac{k}{2}\right)\mathrm{se}_{n}\left(\eta', \frac{k^{2}}{4}\right)\mathrm{Ms}_{n}^{(3)}\left(\xi, \frac{k}{2}\right)\mathrm{se}_{n}\left(\eta, \frac{k^{2}}{4}\right)\right]$$

into eigenfunctions in elliptical coordinates.

\end{lem}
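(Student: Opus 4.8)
The plan is to obtain the stated formula as the elliptical-coordinate addition theorem for the Hankel function $H_0^{(1)}(k|\cdot|)$, recalling that $v_k = Y_0(k|\cdot|) = \operatorname{Im} H_0^{(1)}(k|\cdot|)$ and that the third-kind radial Mathieu functions $\mathrm{Mc}_n^{(3)},\mathrm{Ms}_n^{(3)}$ are precisely the ones carrying the Hankel asymptotics (1.10). This mirrors the derivation of the $u_k = J_0$ expansion (Theorem 4.2 of \cite{7}) quoted in (3.5), the single change being that the singularity of the kernel forces a third-kind radial factor in the outer variable in place of a first-kind one. First I would fix the source point $(\xi', \eta')$ and regard $w := H_0^{(1)}(k|x(\xi', \eta') - x(\xi, \eta)|)$ as a function of $(\xi, \eta)$. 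Away from the coincidence set $x(\xi, \eta) = x(\xi', \eta')$ the kernel solves the Helmholtz equation, hence $L_{\xi, \eta} w = 0$ with $L_{\xi, \eta}$ the operator from Lemma 4.1; since $w$ is $2\pi$-periodic and analytic in $\eta$, the expansion (1.9) lets me write $w = \sum_{n}\big(P_n(\xi)\mathrm{ce}_n(\eta, k^2/4) + Q_n(\xi)\mathrm{se}_n(\eta, k^2/4)\big)$, where separation of variables (1.7)--(1.8) forces each coefficient $P_n, Q_n$ to solve the radial Mathieu equation (1.7).

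Next I would pin down the two radial factors by boundary behaviour. In the outer region $\xi > \xi'$ the function $w$ inherits the outgoing-wave behaviour of $H_0^{(1)}$ as $\cosh\xi \to \infty$, so among the solutions of (1.7) only the third-kind ones survive, giving $P_n(\xi) = c_n(\xi', \eta')\mathrm{Mc}_n^{(3)}(\xi, k/2)$ and $Q_n(\xi) = d_n(\xi', \eta')\mathrm{Ms}_n^{(3)}(\xi, k/2)$ by the asymptotics (1.10). Repeating the argument in the source variable --- where, because $\xi' < \xi$ keeps us off the singular set, $w$ is a \emph{regular} separable solution --- and using the symmetry $H_0^{(1)}(k|x - y|) = H_0^{(1)}(k|y - x|)$, the coefficients themselves separate as $c_n = \gamma_n\mathrm{Mc}_n^{(1)}(\xi', k/2)\mathrm{ce}_n(\eta', k^2/4)$ and $d_n = \delta_n\mathrm{Ms}_n^{(1)}(\xi', k/2)\mathrm{se}_n(\eta', k^2/4)$, the first-kind radial factors being the ones selected by regularity (asymptotics (1.11)). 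This produces the bilinear series of the lemma up to the scalar constants $\gamma_n, \delta_n$, which at this stage are still unknown and possibly $n$-dependent.

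The crux is to show that $\gamma_n = \delta_n = 2\pi i$ for every $n$. I would do this by an asymptotic matching: letting $\xi, \xi' \to \infty$, the confocal ellipses degenerate into large concentric circles, and by (1.10)--(1.11) the radial Mathieu functions reduce to the Hankel and Bessel functions $H_n(k\cosh\xi)$, $J_n(k\cosh\xi')$ while $\mathrm{ce}_n, \mathrm{se}_n$ reduce to $\cos n\eta, \sin n\eta$; in this limit the elliptical series must coincide term by term with the polar Hankel addition theorem underlying (1.2)--(1.3), which fixes the common constant $2\pi i$ independently of $n$. Equivalently, one may insert the connection formulas $\mathrm{Mc}_n^{(1)} = \rho_{n,1}\mathrm{ce}_n(i\,\cdot\,, k^2/4)$ and $\mathrm{Ms}_n^{(1)} = \rho_{n,2}\mathrm{se}_n(i\,\cdot\,, k^2/4)$ and compare with the $J_0$ expansion (3.5) of \cite{7}; taking imaginary parts then recovers $v_k = Y_0$. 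I expect this normalization to be the main obstacle: it requires justifying the term-by-term passage to the limit, controlling the uniform and absolute convergence of the double series on compact subsets of $\{\xi' < \xi\}$ (guaranteed in $\eta$ by (1.9) but needing the analogous estimate in $\xi$), and checking that the singular points $\pm(\xi', \eta')$ of the kernel are correctly reproduced by the third-kind radial solution. The remaining verifications --- periodicity, the split of the $n = 0$ term (with $\mathrm{se}_0 \equiv 0$), and orthonormality of $\{\mathrm{ce}_n, \mathrm{se}_n\}$ --- are routine.
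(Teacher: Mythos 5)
Your first half follows the paper: expanding $v_k$ (or $H_0^{(1)}$) in the angular Mathieu basis via (1.9), noting that the radial coefficients solve (1.7), and selecting the third-kind solutions $\mathrm{Mc}_n^{(3)},\mathrm{Ms}_n^{(3)}$ in the outer variable by comparing the asymptotics (1.10)--(1.11) is exactly how the paper rules out the $\mathrm{Mc}_n^{(1)}$-contribution ($c_2=0$). The gap is in the two steps you yourself flag as the crux, and as proposed they do not go through. First, the diagonal, separated dependence on the source point, $c_n(\xi',\eta')=\gamma_n\,\mathrm{Mc}_n^{(1)}(\xi',k/2)\,\mathrm{ce}_n(\eta',k^2/4)$, does not follow from ``$w$ is a regular solution in $(\xi',\eta')$'': regularity only gives an expansion of $c_n$ over \emph{all} angular orders $m$, and unlike the circular case there is no rotation invariance in elliptic coordinates to force the order-$n$ coefficient in $(\xi,\eta)$ to be of pure order $n$ in $(\xi',\eta')$. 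Second, the normalization argument is flawed: in the limit $\xi,\xi'\to\infty$ at fixed $k$ the angular functions $\mathrm{ce}_n(\eta,k^2/4),\mathrm{se}_n(\eta,k^2/4)$ do \emph{not} reduce to $\cos n\eta,\sin n\eta$ (that happens only as $k\to0$; they do not depend on $\xi$ at all), so the elliptic series and the polar Hankel addition theorem are written in different angular bases and cannot be matched term by term to extract a common constant. Your fallback --- comparing with (3.5) and taking imaginary parts --- is not available within the paper either, since the constants $\mu_n,\nu_n$ there are left unspecified, and it would in any case require untangling which parts of $\gamma_n\,\mathrm{Mc}_n^{(3)}$ are real.

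What the paper does instead, and what your proposal is missing, is a Wronskian identity obtained from Green's theorem. Lemma 4.1 holds with $u_k$ replaced by any even, $2\pi$-periodic solution of $L_{\xi,\eta}$; choosing $U(\xi,\eta)=\mathrm{Mc}_n^{(1)}(\xi,k/2)\,\mathrm{ce}_n(\eta,k^2/4)$ and setting $f(\xi)=\tfrac14\int_{-\pi}^{\pi}v_k(\xi,\eta,\xi',\eta')\,\mathrm{ce}_n(\eta,k^2/4)\,d\eta$ gives
\begin{equation*}
\mathrm{Mc}_n^{(1)}\left(\xi',\tfrac{k}{2}\right)\mathrm{ce}_n\left(\eta',\tfrac{k^2}{4}\right)=W\left[f,\mathrm{Mc}_n^{(1)}\left(\cdot,\tfrac{k}{2}\right)\right],
\end{equation*}
a constant in $\xi$. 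Combined with $f=c_1\mathrm{Mc}_n^{(3)}$ (your asymptotics step) and the known value $W[\mathrm{Mc}_n^{(1)},\mathrm{Mc}_n^{(3)}]=-2i/\pi$, this yields in one stroke both the separated $(\xi',\eta')$-dependence and the exact constant $c_1=-\tfrac{\pi}{2i}\mathrm{Mc}_n^{(1)}(\xi',k/2)\,\mathrm{ce}_n(\eta',k^2/4)$, i.e.\ the factor $2\pi i$, with no limiting or matching argument; the $\mathrm{se}_n$ terms are handled identically. Without this (or an equivalent exact mechanism), your proposal establishes the form of the expansion only up to the undetermined constants $\gamma_n,\delta_n$ and an unproved diagonality, so it does not yet prove the lemma.
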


\begin{proof}

The proof of Lemma 4.2 is based on the proof of Theorem 5.2 in \cite{7}. There, the authors derive the expansion into eigenfunctions in elliptical coordinates of the modified Bessel function of the second kind $K_{0}$. Here, we will use a slight modification of the proof introduced in \cite{7} in order to adjust it to the Bessel function of the second kind $Y_{0}$.

Since $\xi'\neq\pm\xi$ then for every fixed $\xi, \xi'$ and $\eta'$ the function $f_{\xi, \xi',\eta'}(z) = v_{k}(\xi, z, \xi', \eta')$ can be extended, in an open strip containing $\Bbb R$, into a $2\pi$ periodic analytic function. Thus, it has an expansion in the form of (1.9):
$$\hskip-6.5cm v_{k}(\xi, \eta, \xi', \eta') = \alpha_{0}(\xi, \xi', \eta')\textrm{ce}_{0}\left(\eta, \frac{k^{2}}{4}\right)$$
$$\hskip2cm + \sum_{n = 1}^{\infty}\left(\alpha_{n}(\xi, \xi', \eta')\textrm{ce}_{n}\left(\eta, \frac{k^{2}}{4}\right) + \beta_{n}(\xi, \xi', \eta')\textrm{se}_{n}\left(\eta, \frac{k^{2}}{4}\right)\right)$$
where
\vskip-0.2cm
$$\hskip-4.7cm\alpha_{n}(\xi, \xi', \eta') = \frac{1}{\pi}\int_{-\pi}^{\pi}v_{k}(\xi, \eta, \xi', \eta')\textrm{ce}_{n}\left(\eta, \frac{k^{2}}{4}\right)d\eta,$$
$$\hskip-4.7cm\beta_{n}(\xi, \xi', \eta') = \frac{1}{\pi}\int_{-\pi}^{\pi}v_{k}(\xi, \eta, \xi', \eta')\textrm{se}_{n}\left(\eta, \frac{k^{2}}{4}\right)d\eta.$$
Hence, our aim is to find the explicit form of the functions $\alpha_{n}$ and $\beta_{n}$.

Now, we are going to use again Lemma 4.1. This lemma was formulated for the function $u_{k}$, however it is clear from its proof that it is true when $u_{k}$ is replaced with any function $U(\xi, \eta)$ which is in the kernel of the operator $L_{\xi, \eta}$, even with respect to the variable $(\xi,\eta)$ (i.e., $U(\xi, \eta) = U(-\xi, -\eta)$) and which is $2\pi$ periodic with respect to the variable $\eta$. Hence, choosing $U(\xi, \eta) = u(\xi)\textrm{ce}_{n}\left(\eta, \frac{k^{2}}{4}\right)$ where $u$ is an even function, which is a solution to (1.7), we have
\vskip0.2cm
$$\hskip-11cm u(\xi')\textrm{ce}_{n}\left(\eta', \frac{k^{2}}{4}\right)$$
$$ = \frac{1}{4}\int_{-\pi}^{\pi}\left(u'(\xi)\textrm{ce}_{n}\left(\eta, \frac{k^{2}}{4}\right)v_{k}(\xi, \eta, \xi', \eta') - \partial_{1} v_{k}(\xi, \eta, \xi', \eta')u(\xi)\textrm{ce}_{n}\left(\eta, \frac{k^{2}}{4}\right)\right)d\eta$$
where the last identity is true in case where $|\xi'| < \xi$. Hence, if we define the following function
\vskip-0.2cm
\begin{equation}\hskip-6.3cm f(\xi) = \frac{1}{4}\int_{-\pi}^{\pi}v_{k}(\xi, \eta, \xi', \eta')\textrm{ce}_{n}\left(\eta, \frac{k^{2}}{4}\right)d\eta,\end{equation}
then we have
\vskip-0.2cm
$$\hskip-6cm u(\xi')\textrm{ce}_{n}\left(\eta', \frac{k^{2}}{4}\right) = u'(\xi)f(\xi) - u(\xi)f'(\xi).$$
Let us choose $u(\xi) = \textrm{Mc}_{n}^{(1)}\left(\xi, \frac{k}{2}\right)$, then we have
$$\hskip-1cm\textrm{Mc}_{n}^{(1)}\left(\xi', \frac{k}{2}\right)\textrm{ce}_{n}\left(\eta', \frac{k^{2}}{4}\right) = \partial_{\xi}\textrm{Mc}_{n}^{(1)}\left(\xi, \frac{k}{2}\right)f(\xi) - \textrm{Mc}_{n}^{(1)}\left(\xi, \frac{k}{2}\right)f'(\xi)$$
\begin{equation}\hskip0.4cm = W\left[f, \textrm{Mc}_{n}^{(1)}\left(\cdot, \frac{k}{2}\right)\right]\end{equation}
where $W$ denotes the wronskian. Thus, the wronskian of $f$ and $\textrm{Mc}_{n}^{(1)}$ is equal to a constant which does not depend on $\xi$. However, is was proved in \cite[page 668, formula 28.20.1]{4} that the wronskian of $\textrm{Mc}_{n}^{(1)}$ and $\textrm{Mc}_{n}^{(3)}$ is also equal to a constant. Hence, there exist constants $c_{1}$ and $c_{2}$ which do not depend on $\xi$ such that
$$\hskip-4.5cm f(\xi) = c_{1}\textrm{Mc}_{n}^{(3)}\left(\xi,\frac{k}{2}\right) + c_{2}\textrm{Mc}_{n}^{(1)}\left(\xi,\frac{k}{2}\right), \xi > |\xi'|.$$
However, we claim that $c_{2} = 0$. Indeed, taking the analytic extensions of the functions $f, \textrm{Mc}_{n}^{(3)}$ and $\textrm{Mc}_{n}^{(1)}$ in a narrow strip which contains the ray $[\xi',\infty)$ we see, from equations (1.10) and (4.4), that both $f$ and $\textrm{Mc}_{n}^{(3)}$ have the same asymptotic expansion at infinity (observe that $f(z)\approx Y_{0}(k\cosh z)$) while $\textrm{Mc}_{n}^{(1)}$ behaves like  (see equation (1.11))$e^{\left|\Im\left(k\cosh z\right)\right|}O\left(\left(\cosh z\right)^{-\frac{3}{2}}\right)$.  This implies that $c_{2} = 0$ and hence
\begin{equation}\hskip-6.5cm f(\xi) = c_{1}(\xi', \eta')\textrm{Mc}_{n}^{(3)}\left(\xi,\frac{k}{2}\right), \xi > |\xi'|.\end{equation}
In order to find the constant $c_{1}$ we use the fact that the wronskian of $\textrm{Mc}_{n}^{(1)}$ and $\textrm{Mc}_{n}^{(3)}$ is equal to $- 2i / \pi$ and equation (4.5) in order to obtain that
$$\hskip-7cm c_{1} = -\frac{\pi}{2i}\textrm{Mc}_{n}^{(1)}\left(\xi', \frac{k}{2}\right)\textrm{ce}_{n}\left(\eta', \frac{k^{2}}{4}\right).$$
Hence, using equations (4.4) and (4.6) we finally obtain that

$$\hskip-8cm\int_{-\pi}^{\pi}v_{k}(\xi, \eta, \xi', \eta')\textrm{ce}_{n}\left(\eta, \frac{k^{2}}{4}\right)d\eta $$ $$\hskip-3cm = 2\pi i\textrm{Mc}_{n}^{(1)}\left(\xi', \frac{k}{2}\right)\textrm{ce}_{n}\left(\eta', \frac{k^{2}}{4}\right)\textrm{Mc}_{n}^{(3)}\left(\xi,\frac{k}{2}\right), |\xi'| < \xi.$$

In the exact same way we can show that
\vskip-0.2cm
$$\hskip-8cm\int_{-\pi}^{\pi}v_{k}(\xi, \eta, \xi', \eta')\textrm{se}_{n}\left(\eta, \frac{k^{2}}{4}\right)d\eta $$ $$\hskip-3cm = 2\pi i\textrm{Ms}_{n}^{(1)}\left(\xi', \frac{k}{2}\right)\textrm{se}_{n}\left(\eta', \frac{k^{2}}{4}\right)\textrm{Ms}_{n}^{(3)}\left(\xi,\frac{k}{2}\right), |\xi'| < \xi.$$

This proves Lemma 4.2.

\end{proof}

\begin{lem}

Let $f$ be a continuous function defined on $\Bbb R^{2}$ with compact support, then $f$ has the following representation
$$\hskip-6.5cm f(x) = \frac{1}{2\pi}\int_{0}^{\infty}\int_{\Bbb R^{2}}f(y)J_{0}(k|x - y|)dykdk.$$

\end{lem}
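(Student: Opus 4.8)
The plan is to recognize the claimed identity as nothing more than the two-dimensional Fourier inversion formula rewritten in polar frequency coordinates, the angular integration being exactly what manufactures the Bessel kernel $J_{0}$.

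First I would recall the Fourier inversion formula in $\Bbb R^{2}$: writing $\widehat{f}(\xi) = \int_{\Bbb R^{2}} f(y) e^{-i\xi\cdot y}\, dy$, one has $f(x) = (2\pi)^{-2}\int_{\Bbb R^{2}}\widehat{f}(\xi)e^{i\xi\cdot x}\, d\xi$. Then I would pass to polar coordinates $\xi = k\,\omega$ with $k = |\xi|\in[0,\infty)$ and $\omega$ ranging over the unit circle, so that $d\xi = k\, dk\, d\omega$, and substitute the defining integral for $\widehat{f}$. After interchanging the order of integration this yields
$$ f(x) = \frac{1}{(2\pi)^{2}}\int_{0}^{\infty}\int_{\Bbb R^{2}} f(y)\left(\int_{-\pi}^{\pi} e^{ik(x - y)\cdot\omega(\theta)}\, d\theta\right) dy\, k\, dk, $$
where $\omega(\theta) = e^{i\theta}$ is viewed as a point of the unit circle.

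The crucial step is the evaluation of the inner angular integral. Writing $x - y = |x - y| e^{i\alpha}$ one has $(x - y)\cdot\omega(\theta) = |x - y|\cos(\theta - \alpha)$, and by $2\pi$-periodicity the phase shift $\alpha$ drops out, so that the classical integral representation $J_{0}(t) = \frac{1}{2\pi}\int_{-\pi}^{\pi} e^{it\cos\theta}\, d\theta$ gives $\int_{-\pi}^{\pi} e^{ik(x-y)\cdot\omega(\theta)}\, d\theta = 2\pi J_{0}(k|x - y|)$. Inserting this into the displayed identity and cancelling one factor of $2\pi$ produces exactly $f(x) = \frac{1}{2\pi}\int_{0}^{\infty}\int_{\Bbb R^{2}} f(y) J_{0}(k|x - y|)\, dy\, k\, dk$, which is the assertion.

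The main obstacle is purely one of convergence and of the legitimacy of the interchange of integrals, since for a merely continuous, compactly supported $f$ the Fourier transform $\widehat{f}$ need not be integrable and the outer $k$-integral need not converge absolutely. I would handle this by inserting a Gaussian regularizing factor $e^{-\varepsilon|\xi|^{2}}$ into the inversion integral: for each $\varepsilon > 0$ the resulting integrand is absolutely integrable, so Fubini applies and the angular computation above goes through verbatim, giving the identity with an extra factor $e^{-\varepsilon k^{2}}$ in the $k$-integral. The left-hand side is then the Gauss--Weierstrass mean of $f$, which converges to $f(x)$ at every point of continuity as $\varepsilon\to 0^{+}$; since $f$ is continuous this recovers $f(x)$. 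Letting $\varepsilon\to 0^{+}$ on the right-hand side, and justifying the passage to the limit using the compact support and continuity of $f$ together with the $O(k^{-1/2})$ decay of $J_{0}$, completes the argument. This summability step is the only delicate point; the algebraic heart of the lemma is the angular integral identity.
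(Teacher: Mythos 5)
Your proposal is correct and follows essentially the same route as the paper: two-dimensional Fourier inversion, polar coordinates in the frequency variable, and the angular integral representation $J_{0}(t) = \frac{1}{2\pi}\int_{-\pi}^{\pi}e^{it\cos\theta}d\theta$ producing the Bessel kernel. The only difference is that you justify the interchange of integrals and the convergence of the $k$-integral via a Gaussian regularization, a point the paper's computation treats purely formally, so your write-up is if anything slightly more careful than the original.
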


\begin{proof}

From the inversion formula for the Fourier transform in $\Bbb R^{2}$ we obtain that
$$\hskip-8.5cm f(x) = \frac{1}{2\pi}\int_{\Bbb R^{2}}\mathcal{F}(f)(y)e^{ix\cdot y}dy$$
where $\mathcal{F}(f)$ denotes the Fourier transform of $f$. Using the definition of the Fourier transform we have
$$\hskip-1.5cm f(x) = \frac{1}{4\pi^{2}}\int_{\Bbb R^{2}}\int_{\Bbb R^{2}}f(z)e^{-iz\cdot y}dze^{ix\cdot y}dy = \frac{1}{4\pi^{2}}\int_{\Bbb R^{2}}\int_{\Bbb R^{2}}f(z)e^{i(x - z)\cdot y}dydz$$
$$ = \left[y = ke^{i\theta}, dy = kd\theta dk\right] = \frac{1}{4\pi^{2}}\int_{\Bbb R^{2}}\int_{0}^{\infty}\int_{-\pi}^{\pi}f(z)e^{ik(x - z)\cdot e^{i\theta}}d\theta kdkdz$$
$$\hskip-5.7cm = \frac{1}{2\pi}\int_{0}^{\infty}\int_{\Bbb R^{2}}f(z)J(k|x - z|)dzkdk$$
where in the last passage we used the following formula
$$\hskip-9.5cm J_{0}(t) = \frac{1}{2\pi}\int_{-\pi}^{\pi}e^{it\cos\theta}d\theta $$
for the Bessel function of the first kind. This proves Lemma 4.3.

\end{proof}

\end{document}